\documentclass[a4paper, 10pt]{amsart}
\usepackage[margin=3cm]{geometry}
\usepackage{amsmath, amsfonts, amssymb}
\usepackage{amsthm}
\usepackage{esint}
\usepackage[
bookmarks=false]{hyperref}
\usepackage{cite}
\usepackage{array}

\usepackage{amsaddr}
\usepackage{enumerate}
\pagestyle{plain}
\usepackage{pgfplots}
\usepackage{cleveref}

\usepackage [english]{babel}
\usepackage [autostyle, english = american]{csquotes}
\MakeOuterQuote{"}

\title{An extremality property of Szeg\H{o} projections\\ on Heisenberg groups}

\author{Gian Maria Dall'Ara}
\address{Istituto Nazionale di Alta Matematica ``F. Severi"\\ Research Unit Scuola Normale Superiore\\
	Piazza dei Cavalieri, 7, 56126 Pisa, Italy}
\email{dallara@altamatematica.it}

\author{Bernhard Lamel}
\address{Science Program, Texas A\& M University at Qatar \\ Education City, Doha, Qatar }
\email{bernhard.lamel@qatar.tamu.edu, bernhard.lamel@univie.ac.at}

\date{\today}

\newcommand{\R}{\mathbb{R}}
\newcommand{\N}{\mathbb{N}}

\newcommand{\C}{\mathbb{C}}

\newcommand{\eps}{\varepsilon}
\newcommand{\CR}{\mathrm{CR}}
\newcommand{\Sz}{\mathcal{S}}
\newcommand{\vnorm}[1]{\left\| #1 \right\|}

\newtheorem{thm}{Theorem}
\newtheorem*{thm'}{Theorem 1'}
\newtheorem{lem}[thm]{Lemma}
\newtheorem{prop}[thm]{Proposition}
\newtheorem{dfn}[thm]{Definition}

\newtheorem*{ex}{Example}
\newtheorem{rmk}[thm]{Remark}

\DeclareMathOperator{\real}{Re}
\DeclareMathOperator{\imag}{Im}

\begin{document}

\begin{abstract}
We prove that Heisenberg groups, a.k.a.~the boundaries of Siegel domains, minimize the $L^p$ operator norm of the Szegő projection in a large class of weighted CR manifolds of hypersurface type. 
\end{abstract}

\maketitle

\section{Introduction} 
\label{sec:introduction}

The classical Szegő projection associates to every $L^2$ function on the boundary of a domain in $\C^n$ its orthogonal projection onto the space of boundary values of holomorphic functions, that is, the Hardy space $H^2$. For a variety of pseudoconvex domains, the corresponding Szegő projection is known to be a singular integral operator (see, e.g., \cite{phong_stein, nagel_rosay_stein_wainger, mcneal_stein, charpentier_dupain}), whose $L^p$ mapping behaviour is therefore of considerable interest from the point of view of harmonic and complex analysis alike. 

In this note, we compare the $L^p$ mapping behaviour of the Szegő projection $\mathcal{S}$ on a real hypersurface $M\subset \C^{n+1}$ with the $L^p$ mapping behaviour of the Szegő projection $\mathcal{S}_n$ on the model strictly pseudoconvex hypersurface of the same dimension, namely the Heisenberg group
\[ \mathbb{H}^n=\{z\in \C^{n+1}\colon  \imag z_{n+1} = \left| \hat z \right|^2\},\qquad \hat z= (z_1, \dots ,z_n).  \]
The quantities of interest are the operator norms
\[ \left\| \mathcal{S} \right\|_{p\to p} := \sup_f
\frac{\vnorm{\mathcal{S} f}_p}{\vnorm{f}_p}, \quad \left\| \mathcal{S}_n \right\|_{p\to p} := \sup_f 
\frac{\vnorm{\mathcal{S}_n f}_p}{\vnorm{f}_p} \qquad (1\leq p\leq \infty),  \]
where the Szegő projections and the $L^p$ norms $\vnorm{\cdot}_p$ are defined with respect to a fixed background measure, which we always assume to have a smooth positive density with respect to Lebesgue measure. The background measure on the model $\mathbb{H}^n$ is Lebesgue measure in the global coordinates $(\hat z, \real z_{n+1})$. 

Our main result shows that the model Szegő projection $\mathcal{S}_n$ minimizes the $L^p$ operator norm in a large class of hypersurfaces $M$. 

\begin{thm}\label{thm:intro}
If $M$ is compact and pseudoconvex, then 
\[ \left\| \mathcal{S} \right\|_{p\to p} \geq 
 \left\| \mathcal{S}_n \right\|_{p\to p} . \]
The same conclusion holds, more generally, if $M$ is a $(2n+1)$-dimensional CR manifold of hypersurface type satisfying property C at some strongly pseudoconvex point $x_0$. 
\end{thm}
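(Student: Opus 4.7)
The plan is to test $\vnorm{\Sz}_{p \to p}$ against a sequence of functions on $M$ concentrating parabolically around $x_0$, whose rescalings represent an arbitrary $L^p$ test function on the Heisenberg model. Fix Heisenberg-adapted coordinates centered at $x_0$ in which $M$ agrees with $\mathbb{H}^n$ up to parabolic order three, and in which the background measure on $M$ has unit density at the origin. Let $\delta_\lambda(\hat z, t) = (\lambda \hat z, \lambda^2 t)$ denote the standard Heisenberg dilation and $Q = 2n+2$ the homogeneous dimension. Given $f \in C_c^\infty(\mathbb{H}^n)$, I would define, for $\lambda$ large,
\[ f_\lambda(x) := \lambda^{Q/p} f(\delta_\lambda(x)), \]
transported to $M$ via the chart (well-defined once the support of $f_\lambda$ sits inside the chart). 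A routine change of variables then yields $\vnorm{f_\lambda}_{L^p(M)} \to \vnorm{f}_{L^p(\mathbb{H}^n)}$ as $\lambda \to \infty$.

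The core step is the convergence $\vnorm{\Sz f_\lambda}_{L^p(M)} \to \vnorm{\Sz_n f}_{L^p(\mathbb{H}^n)}$. Transporting $\Sz f_\lambda$ back to $\mathbb{H}^n$ and undoing the dilation produces a rescaled operator $\Sz_\lambda$, acting on a growing region of $\mathbb{H}^n$, whose $L^p$ norm coincides with that of $\Sz$ restricted to the corresponding shrinking region of $M$. The claim reduces to $\Sz_\lambda g \to \Sz_n g$ in $L^p(\mathbb{H}^n)$ for each $g \in C_c^\infty(\mathbb{H}^n)$, which is exactly what property C at $x_0$ is designed to provide. Combining this with the first step yields
\[ \vnorm{\Sz}_{p \to p} \geq \lim_{\lambda \to \infty} \frac{\vnorm{\Sz f_\lambda}_{L^p(M)}}{\vnorm{f_\lambda}_{L^p(M)}} = \frac{\vnorm{\Sz_n f}_{L^p(\mathbb{H}^n)}}{\vnorm{f}_{L^p(\mathbb{H}^n)}}, \]
and taking the supremum over $f$ finishes the proof. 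In the compact pseudoconvex case, property C at any strongly pseudoconvex boundary point should follow from the Boutet de Monvel--Sj\"ostrand parametrix for $\Sz$, whose leading contribution in Heisenberg-adapted coordinates is essentially the Heisenberg Szeg\H{o} kernel $K_n$.

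The main obstacle is controlling $\Sz f_\lambda$ away from $x_0$. Since $\Sz$ is a non-local singular integral, pointwise or distributional convergence of the rescaled kernels on compact subsets of $\mathbb{H}^n$ is not enough: one must rule out nontrivial contributions of $\Sz f_\lambda$ outside any fixed neighborhood of $x_0$, including any mass that global CR-holomorphic functions on $M$ might transport away from the model. Formulating property C so as to deliver the needed uniform off-diagonal $L^p$ estimates, and then establishing it in the compact pseudoconvex setting, is where the analytic heart of the argument must lie.
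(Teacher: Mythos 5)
Your transplantation skeleton --- parabolic dilations, $L^p$-normalized test functions $f_\lambda$ concentrating at $x_0$, comparison with the model after rescaling --- is indeed the paper's starting point, but the step you call the core, namely strong convergence $\Sz_\lambda g \to \Sz_n g$ in $L^p$, ``which is exactly what property C is designed to provide,'' is a genuine gap on two counts. First, property C is not an operator- or kernel-approximation statement: it is the compactness of the local map $f \mapsto 1_B(1-\Sz)f$ from the energy space $\mathfrak{D}(B)$ to $L^2(B)$, and by itself it yields no $L^p$ convergence of rescaled projections; your proposal offers no argument for the claimed convergence, and the paper never proves it. Second, strong convergence is more than the inequality needs. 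The paper instead cuts off $\Sz(f\circ\Phi_\lambda)$ at the \emph{intermediate} scale $\sqrt\lambda$ (this cutoff only lowers the $L^p$ norm, which is harmless for a lower bound on $N_p(M,\nu)$), extracts by Banach--Alaoglu only a \emph{weak} limit $g$ of the rescalings $g_\lambda$, and concludes via weak lower semicontinuity of the norm once it is shown that $g=\Sz_n f$. That identification is where all the work lies and splits into: (i) $g$ is CR for the model structure, because the Folland--Stein error terms $E_j$ have parabolic weight $\geq 0$ and so contribute $O(\lambda^{-1})$ after rescaling; and (ii) $f-g$ is orthogonal to $\CR^2(\mathbb{H}^n,\sigma)$, proved by duality against a dense class $\mathcal{D}$ of model CR functions with rapid decay (constructed via the partial Fourier transform in $t$, Lemma \ref{density_lem}): one transplants $h\in\mathcal{D}$ onto $M$, checks that the family $\lambda^{n+1}\rho_{\sqrt\lambda}\frac{d\sigma}{d\nu}\,h\circ\Phi_\lambda$ has uniformly bounded CR energy (here the rapid decay of $h$ is essential), and invokes property C to get a strong $L^2$ limit of $1_B\lambda^{n+1}(I-\Sz)\bigl(\rho_{\sqrt\lambda}\frac{d\sigma}{d\nu}\,h\circ\Phi_\lambda\bigr)$ along a subsequence. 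This compactness step is precisely the mechanism that rules out mass being carried away by global CR functions on $M$ --- the obstacle you correctly flag in your final paragraph but leave unresolved; your write-up contains no substitute for step (ii).

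Your proposed route to property C in the compact case also fails as stated: the Boutet de Monvel--Sj\"ostrand parametrix requires strict pseudoconvexity, while Theorem \ref{thm:intro} assumes only (weak) pseudoconvexity of the compact $M$; moreover, even a local parametrix near $x_0$ says nothing about the global projection --- the strongly pseudoconvex CR torus recalled in the introduction, where $\Sz$ is the averaging operator, shows that no purely local kernel analysis can suffice. The paper instead obtains property C from Kohn's subelliptic $\frac12$-estimate near the strongly pseudoconvex point, which is available because compact pseudoconvex CR submanifolds of $\C^N$ automatically have closed range for $\overline\partial_b$ (Baracco), followed by Sobolev embedding. Two smaller omissions: Banach--Alaoglu is unavailable in $L^1$ (the paper routes the case $p=1$ through the weak $L^2$ limit and semicontinuity), and $p=\infty$ requires the self-adjointness/duality reduction to $p=1$, neither of which your argument addresses.
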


The definition of property C is given in Section \ref{sec:basics}. This property depends both on the CR structure and the choice of background measure, and it is automatically satisfied by every compact pseudoconvex CR submanifold of $\C^N$ (not necessarily a real hypersurface, see Remark \ref{rmk:property_C} below) and by the model itself (see Proposition \ref{heis_propC}). 

The idea of the proof is pretty straightforward: since $M$ is well-approximated by the model $\mathbb{H}^n$ near a strongly pseudoconvex point $x_0$, whenever $f\in L^2(M)$ is sharply localized around $x_0$ its Szegő projection $\Sz f$ can be well-approximated by $\Sz_n \tilde{f}$, where $\tilde{f}$ is a function obtained "transplanting" $f$ on the model. 

Yet, there is a difficulty. We know a priori that \emph{such an approach cannot work on every, even strongly pseudoconvex, CR manifold}. In fact, there are compact strongly pseudoconvex CR structures on the three-dimensional torus $\mathbb{T}^3$ with the property that the only square-integrable CR functions are the constants \cite{barrett}. For such a pathological CR manifold $M$, the associated Szegő projection is the averaging operator $\Sz f=\frac{1}{\nu(M)}\int_Mf\, d\nu$, whose $L^p$ operator norm is $\lVert \Sz\rVert_{p\rightarrow p}=1$ for every $p\in [1,\infty]$. Since $\Sz_n$ is unbounded on $L^1$ (see, e.g., \cite[Chapter 12]{stein}), the conclusion of Theorem \ref{thm:intro} cannot hold for $M$. Thus, some additional hypothesis, like our property C, and the ensuing twists appearing in our argument, are indeed necessary. 

A few more remarks may be of interest. \begin{enumerate}
	\item We do not know whether a minimizer for the $L^p$ operator norm exists in the restricted class of compact pseudoconvex embeddable CR manifolds of hypersurface type (of a fixed dimension). \emph{It is natural to ask whether the standard CR sphere, endowed with a rotation invariant measure, is such a minimizer}. Since $\mathbb{H}^n$ and the punctured sphere $\mathbb{S}^{2n+1}\setminus\{*\}$ are CR isomorphic (see, e.g., \cite[Chapter 12]{stein}) and points have zero capacity in dimension $3$ or higher, the Szegő projection on the standard CR sphere w.r.t.~a rotation invariant measure is equivalent to the Szegő projection $\widetilde\Sz_n$ on $\mathbb{H}^n$ w.r.t.~a certain (finite) background measure, different from Lebesgue measure. 	
	\item To the best of our knowledge, the constants $\lVert \Sz_n\rVert_{p\rightarrow p}$ (except for the "trivial" cases $p=1,\infty$) are unknown. C.~Liu showed \cite{liu} that $\lVert \Sz_n\rVert_{p\rightarrow p}\geq \frac{\Gamma\left(\frac{n+1}{p}\right)\Gamma\left(\frac{n+1}{q}\right)}{\Gamma\left(\frac{n+1}{2}\right)^2}$, where $q$ is the conjugate exponent to $p$, and conjectures that equality holds. 
	\item Since $\lVert \Sz_n\rVert_{1\rightarrow 1}=\infty$, Theorem \ref{thm:intro} implies that the Szegő projection is unbounded on $L^1$ for a variety of CR structures and background measures. More flexible arguments are available to investigate the $L^1$ (un)boundedness of projection operators onto spaces of solutions of first-order PDEs, see \cite{dallara_L1}. 
	\item On a more speculative note, it is reasonable to expect that if $M$ satisfies property C at some \emph{weakly} pseudoconvex point $x_0$, then $\lVert \Sz\rVert_{p\rightarrow p}\geq \lVert \widetilde{\Sz}\rVert_{p\rightarrow p}$ for an appropriate model Szegő projection $\widetilde{\Sz}$ depending on the nature of the point $x_0$. E.g., if $M$ is three-dimensional, pseudoconvex, and $x_0$ has type $2m\geq 2$, then the appropriate model should be $\{\imag z_2=\varphi(z_1, \overline{z_1})\}\subset \C^2$, where $\varphi$ is a subharmonic (nonharmonic) homogeneous polynomial of degree $2m$. If $\Sz_\varphi$ is the projector on this model, a natural follow-up question is whether $\lVert\Sz_\varphi\rVert_{p\rightarrow p}>\lVert\Sz_{|z_1|^2}\rVert_{p\rightarrow p}=\lVert\Sz_1\rVert_{p\rightarrow p}$ whenever $\varphi$ has degree $4$ or higher. If this were the case, equality in Theorem \ref{thm:intro} (in an appropriate class of manifolds) could only be achieved in the strongly pseudoconvex case. This would be a first step in the understanding of possible extremizers, other than the model, of the inequality of Theorem \ref{thm:intro}. 
	\end{enumerate}

The paper is organized as follows: in Section \ref{sec:basics} we rigorously define Szegő projections on "weighted" CR manifolds and discuss property C, while in Section \ref{sec:proof} we prove the main theorem exploiting a couple of preliminary lemmas presented in Section \ref{sec:prelim}. 

\subsection{Acknowledgments}

This research has been funded by the FWF-project P28154 and the European Union’s Horizon 2020 Research and Innovation Programme under the Marie Sklodowska-Curie Grant Agreement No. 841094. The first-named author would like to thank Alessio Martini for an interesting conversation about "transplantation" proofs in analysis. 

\section{Szegő projections on weighted $\CR$ manifolds}
\label{sec:basics}

We assume that the reader is familiar with the basics of CR manifolds, for which we refer to \cite{ber, dragomir_tomassini}. We limit ourselves to recall a few notions, mostly to establish notation.

A pair $(M,T_{1,0}M)$ is said to be a \emph{CR manifold of hypersurface type} if \begin{itemize}
	\item[i.] $M$ is a $(2n+1)$-dimensional connected and orientable real smooth manifold;
    \item[ii.] $T_{1,0}M$ is a rank $n$ vector subbundle of the complexified tangent bundle $\C TM$ such that $T_{1,0}M\cap \overline{T_{1,0}M}=0$ and $[T_{1,0}M,T_{1,0}M]\subseteq T_{1,0}M$, i.e., the commutator of smooth sections of $T_{1,0}M$ is again a section of $T_{1,0}M$.
	\end{itemize}
A $(2n+1)$-dimensional real smooth submanifold $M$ of $\C^N$ with the property that $T_{1,0}M:=\C TM\cap T_{1,0}\C^N$ has rank constantly equal to $n$ is called a \emph{CR submanifold of hypersurface type}: the pair $(M,T_{1,0}M)$ is a CR manifold of hypersurface type. 

In what follows, we will usually omit the specification "of hypersurface type" and the bundle $T_{1,0}M$ from the notation, and we will just say that $M$ is a $\CR$ manifold, or a CR submanifold of some $\C^N$.

The CR manifold $M$ is \emph{pseudoconvex} if there exists a global nowhere vanishing purely imaginary one-form $\theta$ annihilating $T_{1,0}M\oplus \overline{T_{1,0}M}$ and such that $d\theta (L,\overline{L})\geq 0$ for every section $L$ of $T_{1,0}M$. A point $x_0\in M$ is said to be \emph{strongly pseudoconvex} if $d\theta (L,\overline{L})_{x_0}> 0$ for every $L$ that does not vanish in $x_0$. 

\begin{ex}\label{heis_ex}
In this paper $\mathbb{H}^n$, which will be simply called the Heisenberg group, is the $\CR$ manifold obtained endowing $\C_z^n\times \R_t$ with the $\CR$ structure bundle generated by the complex vector fields \begin{equation*}
L_j:=\partial_{z_j}+i\overline z_j\partial_t \qquad j=1,\ldots, n.
\end{equation*}
See \cite[Chapter 12]{stein} for details on the nilpotent Lie group structure of $\mathbb{H}^n$ and its ties with complex analysis. The Heisenberg group is everywhere strongly pseudoconvex. Our model weighted $\CR$ manifold will be $(\mathbb{H}^n,\sigma)$, where $\sigma=\left(\frac{i}{2}\right)^ndz_1\wedge d\overline{z}_1\wedge  \ldots\wedge dz_n\wedge d\overline{z}_n\wedge dt $ is Lebesgue measure on $\C^n \times \R$. 
\end{ex}

If $f\in L^1_{\mathrm{loc}}(M)$, we say that $f$ is a $\CR$-function if $\overline{L}f=0$ in the sense of distributions for every smooth section $L$ of $T_{1,0}M$. For the convenience of the reader, 
we recall what this means in our context. 

If $L$ is a smooth vector field with complex coefficients, that is, a smooth section of the complexified tangent bundle $\C TM$, then we denote by $L^\dagger$ the adjoint of $L$ with respect to the natural pairing of smooth functions with top-degree forms. More precisely, there is a unique first order differential operator $L^\dagger: \Omega_c^{2n+1}M\rightarrow \Omega_c^{2n+1}M$ such that \begin{equation*}
\int_M Lf \cdot \omega = \int_M fL^\dagger\omega\qquad \forall f\in C^\infty(M),\quad\forall\omega\in \Omega_c^{2n+1}M, 
\end{equation*} 
where $\Omega_c^nM$ is the space of smooth compactly supported $(2n+1)$-forms on $M$. 
If $L=\sum_{j=1}^{2n+1}a_j(x)\partial_{x_j}$ and $\omega=g(x)dx_1\wedge \ldots \wedge dx_{2n+1}$ in a local coordinate system, then $L^\dagger\omega=-\sum_{j=1}^{2n+1}a_j(x)\partial_{x_j}g-\left(\sum_{j=1}^{2n+1}\partial_{x_j}a_j(x)\right)g$. 
If $f\in L^1_{\mathrm{loc}}(M)$, then one says that $Lf=0$ in the sense of distributions if\begin{equation*}
\int_M fL^\dagger\omega =0 \qquad \forall \omega\in \Omega_c^{2n+1}M.
\end{equation*}

A \emph{weighted $\CR$ manifold} is a pair $(M,\nu)$, where $M$ is a $\CR$ manifold and $\nu$ is a smooth positive $(2n+1)$-form on $M$. Of course, $\nu$ may be thought of as a Borel measure on $M$ with smooth positive density with respect to Lebesgue measure in any coordinate chart.

Given a weighted $\CR$ manifold $(M,\nu)$, we denote by $\CR^2(M,\nu)$ the space of $\CR$ functions that are square-integrable with respect to $\nu$, with the usual identification of almost everywhere equal functions.

\begin{prop}
	$\CR^2(M,\nu)$ is a closed subspace of the Hilbert space $L^2(M,\nu)$. 
\end{prop}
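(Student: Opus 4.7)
The plan is to use the standard argument that being a distributional solution of a system of (linear) PDEs is a closed condition under locally integrable limits, and then observe that $L^2$ convergence with respect to the smooth positive measure $\nu$ passes to locally integrable convergence by Cauchy--Schwarz.

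More concretely, I would first note that $\CR^2(M,\nu)$ is a linear subspace of $L^2(M,\nu)$ because the defining equations $\overline{L}f = 0$ are linear in $f$. To show closedness, I would pick a sequence $(f_k)\subset \CR^2(M,\nu)$ converging in $L^2(M,\nu)$ to some $f\in L^2(M,\nu)$, and I would argue that $f_k\to f$ in $L^1_{\mathrm{loc}}(M)$. Indeed, for any compact $K\subset M$, $\nu(K)<\infty$, and Cauchy--Schwarz gives
\[
\int_K |f_k - f|\, d\nu \;\leq\; \sqrt{\nu(K)}\, \|f_k - f\|_{L^2(M,\nu)} \longrightarrow 0,
\]
which (since $\nu$ has smooth positive density with respect to Lebesgue measure in every coordinate chart) is equivalent to $L^1$ convergence on $K$ with respect to Lebesgue measure.

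With this convergence in hand, I would pass to the limit in the distributional identity that expresses the CR condition. Given any smooth section $L$ of $T_{1,0}M$ and any $\omega\in \Omega_c^{2n+1}M$, the form $\overline{L}^\dagger\omega$ is again a compactly supported smooth top-degree form. Since each $f_k$ is CR,
\[
\int_M f_k \cdot \overline{L}^\dagger \omega \;=\; 0 \qquad \text{for every } k.
\]
Because $\overline{L}^\dagger \omega$ is continuous with compact support, the convergence $f_k\to f$ in $L^1_{\mathrm{loc}}$ allows me to pass to the limit inside the integral and conclude
\[
\int_M f \cdot \overline{L}^\dagger \omega \;=\; 0,
\]
valid for every such $L$ and $\omega$. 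This is exactly the statement that $\overline{L}f=0$ in the sense of distributions for every smooth section $L$ of $T_{1,0}M$, so $f\in \CR^2(M,\nu)$.

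I do not anticipate a genuine obstacle here. The only thing to be a little careful about is the distinction between the $L^2$ pairing (weighted by $\nu$) and the duality pairing of functions with top-degree forms used to define $L^\dagger$, but this is resolved by the elementary Cauchy--Schwarz estimate above, which gives $L^1_{\mathrm{loc}}$ convergence regardless of whether one measures against $\nu$ or Lebesgue on a chart.
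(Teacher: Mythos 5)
Your proof is correct and follows essentially the same route as the paper, which simply observes that $L^2(M,\nu)$ convergence implies convergence of the pairings $\int_M f_k\,\omega \to \int_M f\,\omega$ for all $\omega\in\Omega_c^{2n+1}M$, so the distributional condition $\overline{L}f=0$ passes to the limit. The only difference is that you spell out the Cauchy--Schwarz step (local finiteness of $\nu$ and comparability with Lebesgue measure on charts) that the paper leaves implicit.
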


This follows immediately from the definition of $\CR$ functions and the fact that if a sequence of functions $\{f_k\}_k$ converges in $L^2(M,\nu)$ to a function $f$, then 
$\lim_{k\rightarrow+\infty}\int_Mf_k\omega=\int_Mf\omega$ for every $\omega\in \Omega_c^{2n+1}M$. 

Thus, we may define the main object of study of the present paper.

\begin{dfn}
The Szegő projection $\Sz_{M,\nu}$ of the weighted CR manifold $(M,\nu)$ is 
defined as the orthogonal projection operator mapping $L^2(M,\nu)$ onto $\CR^2(M,\nu)$. We denote by $\Sz_n$ the Szeg\"o projection associated to the Heisenberg group $(\mathbb{H}^n,\sigma)$. 
\end{dfn}We are interested in the $L^p$-operator norms of Szegő projections, namely the quantities defined by \begin{equation*}
N_p(M,\nu):=\sup\left\{\left(\int_M|\Sz f|^pd\nu\right)^\frac{1}{p}\colon f\in L^2(M,\nu)\text{ and }\int_M| f|^pd\nu=1\right\},
\end{equation*}
for $p\in [1,+\infty)$, and by\begin{equation*}
N_\infty(M,\nu):=\sup\left\{\lVert\Sz f\rVert_\infty \colon f\in L^2(M,\nu)\text{ and }\lVert f\rVert_\infty=1\right\}.
\end{equation*}

To formulate our main result, we need the notion of Property C (C is for ``compactness''). Recall that $\overline\partial_b$ is the operator mapping $f\in C^\infty(M)$ into the smooth section of $B_{0,1}M$, the dual bundle of $T_{0,1}M$, defined by $\langle\overline\partial_b f, \overline{L}\rangle=\overline Lf$ for every section $L$ of $T_{1,0}M$. Given a smooth positive measure $\nu$ on $M$ and a Hermitian metric $h$ on $B_{0,1}M$, we have the associated quadratic form \begin{equation}\label{eq:CR_energy}
\mathcal{E}(f)=\int_M |f|^2\, d\nu+\int_M|\overline\partial_bf|_h^2\, d\nu.\end{equation}
Given a precompact open set $B\subseteq M$, we denote by $\mathfrak{D}(B)\subseteq L^2(B)$ the completion of $C^\infty_c(B)$ with respect to $\mathcal{E}$. Notice that different choices of $\nu$ and $h$ produce isomorphic topological vector spaces, because the corresponding quadratic forms $\mathcal{E}$ are comparable on test functions supported on $B$. 

\begin{dfn}[Property C]\label{propC}
	We say that the weighted $\CR$ manifold $(M, \nu)$ satisfies property C at $x_0\in M$ if there exists a precompact open neighborhood $B$ of $x_0$ such that the operator \begin{eqnarray*}
		\mathfrak{D}(B)&\longrightarrow& L^2(B)\\
f&\longmapsto& 1_B\left(1-\mathcal{S}_{M,\nu}\right)f
	\end{eqnarray*}
is compact (by the observation right before this definition, compactness is independent of the choice of metric on $B_{0,1}M$). 
\end{dfn}

\begin{rmk}\label{rmk:property_C}
Property C	is a local compactness condition of the kind playing an important role in the theories of the $\overline\partial$ and the $\overline\partial_b$ problem (see, e.g., \cite[Chapter 4]{straube}). By Sobolev embedding, it holds whenever a subelliptic estimate of the form \[
\lVert u\rVert_{W^\epsilon(B)}\leq C \lVert\overline\partial_b u\rVert_{L^2(M)}
\] holds for every $u\in L^2(M)$ orthogonal to $\CR^2(M,\nu)$, where $\epsilon>0$. Such a subelliptic estimate with $\epsilon=\frac{1}{2}$ is known to hold in a small enough neighborhood $B$ of a strongly pseudoconvex point $x_0\in M$, under the additional assumptions that $M$ is compact, pseudoconvex, and that the maximal $L^2$ extension of $\overline\partial_b$ has closed range. We refer to \cite{kohn_estimates} for this and far more general results on these matters. 

We point out that, if $M$ is a compact pseudoconvex CR submanifold of $\C^N$, then the closed range property automatically holds \cite{baracco, baracco2}. Since such a CR submanifold necessarily has a point of strong pseudoconvexity (viz. any point at maximal distance from the origin of $\C^N$), we see that every compact pseudoconvex CR submanifold of $\C^N$ has a strongly pseudoconvex point at which property C holds, that is, satisfies the assumption of Theorem 1' below. 
	\end{rmk}

We can now restate the main theorem in the more precise notation of this section. 

\begin{thm'}\label{thm:main}
	Let $(M,\nu)$ be a weighted $\CR$ manifold of dimension $2n+1$. If $(M,\nu)$ satisfies property C at a strongly pseudoconvex point $x_0\in M$, then we have \begin{equation*}
	N_p(M,\nu)\geq N_p(\mathbb{H}^n,\sigma)\qquad\forall p\in [1,+\infty].
	\end{equation*} 
\end{thm'}

\section{Preliminaries}\label{sec:prelim}

\subsection{Folland--Stein coordinates}

Define the one-parameter group of \emph{parabolic scalings} on $\C^n\times \R$ as follows: \begin{equation*}
\Phi_\lambda(z,t):=(\lambda z, \lambda^2 t)\qquad (\lambda >0).
\end{equation*}
We say that a monomial $z^\alpha \overline{z}^\beta t^\gamma$, where $\alpha, \beta\in \N^n$ and $\gamma\in \N$, has \emph{parabolic weight} \[\sum_{j=1}^n(\alpha_j+\beta_j)+2\gamma,\] and that a smooth function defined in a neighborhood of $(0,0)\in \C^n\times \R$ has parabolic weight $\geq w$ if every monomial with nonzero coefficient in its Taylor expansion has weight $\geq w$. The parabolic weight of a vector field $E=\sum_{j=1}^n\left\{a_j(z,t)\partial_{z_j}+b_j(z,t)\partial_{\overline{z}_j}\right\}+c(z,t)\partial_t$ is computed assigning weight $-1$ to $\partial_{z_j}$ and $\partial_{\overline{z}_j}$ and weight $-2$ to $\partial_t$. In other words, $E$ have parabolic weight $\geq w$ if every $a_j$ and every $b_j$ has parabolic weight $\geq w+1$ and $c$ has parabolic weight $\geq w+2$. 

We refer to \cite{folland_stein} and \cite[Theorem 3.5]{dragomir_tomassini} for a proof of the following proposition. 

\begin{lem}[Folland--Stein coordinates]\label{lem:f_s}
	Let $M$ be a $\CR$ manifold and let $x_0\in M$ be a strictly pseudoconvex point. Then in a neighborhood of $x_0$ there exists a local system of coordinates $(z,t)\in \C\times \R$ such that $(z(x_0), t(x_0))=0$ and a system of local generators of $T_{1,0}M$ of the form \begin{equation}\label{f-s}
	L_j=\partial_{z_j}+i\overline{z}_j\partial_t+ E_j\qquad (j=1,\ldots, n),
	\end{equation}
	where the error terms $E_j$ have parabolic weight $\geq 0$. 
	\end{lem}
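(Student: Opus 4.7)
The plan is to produce Folland--Stein coordinates through a sequence of normalizations of a local basis of $T_{1,0}M$ near $x_0$, working intrinsically (without appealing to local CR embeddability). Strict pseudoconvexity will enter only at the very end, to match the Heisenberg principal term $i\overline{z}_j\partial_t$.

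First I would pick any smooth local basis $L_1^0,\ldots,L_n^0$ of $T_{1,0}M$ and a real vector field $T$ transverse to $T_{1,0}M\oplus \overline{T_{1,0}M}$ at $x_0$, so that $\{L_j^0|_{x_0},\overline{L_j^0}|_{x_0},T|_{x_0}\}$ is a $\C$-basis of $\C T_{x_0}M$. Writing $L_j^0|_{x_0}=\tfrac12(X_j-iY_j)|_{x_0}$ with $X_j,Y_j$ real, I would choose real coordinates $(u_j,v_j,t)$ centered at $x_0$ satisfying $\partial_{u_j}|_{x_0}=X_j|_{x_0}$, $\partial_{v_j}|_{x_0}=Y_j|_{x_0}$, $\partial_t|_{x_0}=T|_{x_0}$; setting $z_j:=u_j+iv_j$ then yields $L_j^0|_{x_0}=\partial_{z_j}|_{x_0}$. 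Expanding $L_j^0=a_j^k\partial_{z_k}+b_j^k\partial_{\overline{z}_k}+c_j\partial_t$ with $a_j^k(0)=\delta_{jk}$ and $b_j^k(0)=c_j(0)=0$, and replacing the basis by $L_j:=(a^{-1})_j^k L_k^0$, one obtains $L_j=\partial_{z_j}+\beta_j^k\partial_{\overline{z}_k}+\gamma_j\partial_t$ with $\beta_j^k(0)=\gamma_j(0)=0$. The term $\beta_j^k\partial_{\overline{z}_k}$ already has parabolic weight $\geq 0$, and the remaining task is to massage $\gamma_j$ so that $\gamma_j-i\overline{z}_j$ has weight $\geq 2$.

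Next I would Taylor-expand $\gamma_j=A_j^k z_k+B_j^k \overline{z}_k+O(\text{weight}\geq 2)$. The integrability condition $[L_j,L_k]\in T_{1,0}M$, read off at $x_0$ on the $\partial_t$-component, forces the symmetry $A_j^k=A_k^j$; an analogous computation of the $\partial_t$-component of $[L_j,\overline{L_k}]|_{x_0}$ produces the anti-Hermitian matrix $B_j^k-\overline{B_k^j}$, which divided by $2i$ is precisely the Levi form at $x_0$ (with respect to a defining one-form $\theta$ normalized so that $\theta|_{x_0}=-dt$). Strict pseudoconvexity at $x_0$ makes this Hermitian matrix positive definite, so a unitary $\C$-linear change of the $z_j$ will diagonalize it and reduce matters to $B_j^k-\overline{B_k^j}=2i\delta_{jk}$. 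A final real shift of the $t$-coordinate, of the form $t\mapsto t+\operatorname{Re}\bigl(\sum_{j,k}A_j^k z_j z_k\bigr)+Q(z,\overline{z})$ with $Q$ a suitably chosen real quadratic form, kills the symmetric $A$-terms and any residual Hermitian part of $B$. The resulting $\gamma_j$ satisfies $\gamma_j=i\overline{z}_j+O(\text{weight}\geq 2)$, so that $E_j:=L_j-\partial_{z_j}-i\overline{z}_j\partial_t$ is of parabolic weight $\geq 0$, as required.

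The main obstacle will be the algebraic bookkeeping in the third step: one has to check that each normalization (diagonalizing the Levi form, then shifting $t$) is compatible with the previous ones, e.g., that the coefficients $\beta_j^k$ remain of weight $\geq 1$ and that the $t$-shift does not degrade the leading constant coefficient $1$ of $\partial_{z_j}$. The essential analytic input throughout is strict pseudoconvexity, which ensures that the Hermitian Levi matrix is positive definite and hence unitarily equivalent to $\delta_{jk}$ -- precisely the structure required to match the Heisenberg model. At a weakly pseudoconvex point this step would collapse, and one could only recover a degenerate (and not universally Heisenberg) principal part.
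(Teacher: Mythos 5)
Your construction is essentially correct, and in fact the paper offers no proof of its own for this lemma: it simply cites Folland--Stein and Dragomir--Tomassini (Theorem 3.5), whose proofs proceed by exactly the kind of intrinsic pointwise normalization you carry out (adapting linear coordinates at $x_0$, using integrability to get the symmetry $A_j^k=A_k^j$, normalizing the Levi matrix by a linear change of the $z$-variables, and absorbing the symmetric and Hermitian quadratic remainders by a shift $t\mapsto t+\operatorname{Re}\bigl(\sum_{j,k}S_{jk}z_jz_k\bigr)+Q(z,\overline{z})$, which leaves the $\partial_{z_k}$-coefficients and the anti-Hermitian part of $B$ untouched). Two small corrections: a \emph{unitary} change of the $z_j$ only diagonalizes the positive definite Hermitian Levi matrix as $\mathrm{diag}(\mu_1,\dots,\mu_n)$ with $\mu_j>0$; to reach $B_j^k-\overline{B_k^j}=2i\delta_{jk}$ you must compose with the anisotropic scaling $z_j\mapsto \sqrt{\mu_j}\,z_j$ and rescale the basis $L_j\mapsto \mu_j^{-1/2}L_j$ (equivalently, use any invertible complex-linear $P$ with $P^*\Lambda P=I$), which preserves all earlier normalizations. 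Moreover, since the sign of the Levi matrix extracted from $[L_j,\overline{L}_k]$ depends on the orientation of the transverse field $T$, you should note that one may have to replace $t$ by $-t$ to make it positive rather than negative definite. Finally, it is worth recording explicitly the elementary fact your conclusion rests on: any smooth coefficient vanishing at the origin automatically has parabolic weight $\geq 1$, and $\gamma_j-i\overline{z}_j$ having no constant or $(z,\overline{z})$-linear terms automatically has weight $\geq 2$ (the $t$-linear term being of weight $2$), so $E_j$ indeed has parabolic weight $\geq 0$.
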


\subsection{A density lemma}

\begin{lem}\label{density_lem}
There exists a dense subspace $\mathcal{D}$ of $\CR^2(\mathbb{H}^n,\sigma)$ such that for every $h\in \mathcal{D}$, $\alpha, \beta\in \N^n$, $\gamma\in \N$, and $N\in \N$, we have \begin{equation*}
|\partial_{z}^\alpha\partial_{\overline z}^\beta\partial_t^\gamma h(z,t)|\leq C_N(|z|^2+|t|)^{-N}.
\end{equation*}
\end{lem}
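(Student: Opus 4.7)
The plan is to use the partial Fourier transform in the $t$ variable to identify $\CR^2(\mathbb{H}^n, \sigma)$ with a weighted $L^2$ space of holomorphic functions, and to take $\mathcal{D}$ to be the image of a natural dense subspace of smooth, rapidly decaying ``symbols''. Since $\overline{L}_j = \partial_{\overline{z}_j}-iz_j\partial_t$, Fourier transforming in $t$ turns the CR equation $\overline{L}_j h = 0$ into the pointwise first-order constraint $(\partial_{\overline{z}_j} + \tau z_j)\widehat h(z,\tau) = 0$, whose solutions are $\widehat h(z,\tau) = e^{-\tau|z|^2}F(z, \tau)$ with $z\mapsto F(z,\tau)$ entire. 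Plancherel then yields, up to a universal constant, an isometric isomorphism
\begin{equation*}
\CR^2(\mathbb{H}^n, \sigma) \;\cong\; L^2\bigl((0,\infty),\, d\tau;\,\mathcal{F}_\tau\bigr),
\end{equation*}
where $\mathcal{F}_\tau$ is the $n$-variable Fock space with weighted norm $\|F\|_{\mathcal{F}_\tau}^2 = \int_{\C^n}|F(z)|^2 e^{-2\tau|z|^2}\,dz$; the restriction $\tau > 0$ is forced by square-integrability.

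I would then let $\mathcal{D}$ consist of finite linear combinations of functions of the form
\begin{equation*}
h(z,t)=\int_0^\infty e^{i\tau t}\,e^{-\tau |z|^2}\,\chi(\tau)\,P(z)\,d\tau,
\end{equation*}
where $P$ is a polynomial in $z_1,\ldots,z_n$ and $\chi\in C^\infty_c((0,\infty))$. The decay bounds are then immediate from differentiation under the integral sign: any derivative $\partial_z^\alpha\partial_{\overline z}^\beta\partial_t^\gamma$ applied to $h$ yields a finite sum of similar integrals, with the integrand replaced by a polynomial in $(z,\overline z, \tau)$ times $e^{i\tau t}e^{-\tau|z|^2}\chi(\tau)$. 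Since $\mathrm{supp}\,\chi \subset [c_0,C_0]\subset (0,\infty)$ for some $c_0>0$, the exponential $e^{-\tau|z|^2}$ is dominated by $e^{-c_0|z|^2}$ on the support, giving Gaussian decay in $z$. Integrating by parts in $\tau$ an arbitrary number of times (no boundary terms, since $\chi$ has compact support) yields arbitrary negative powers of $|t|$. Combining,
\begin{equation*}
\bigl|\partial_z^\alpha\partial_{\overline z}^\beta\partial_t^\gamma h(z,t)\bigr|\le C'_{N}\, e^{-c_0|z|^2}(1+|t|)^{-N}
\end{equation*}
for every $N$, and a routine case distinction between the regimes $|z|^2 \ge |t|$ and $|z|^2 \le |t|$ shows that this majorant is itself $\le C_N(|z|^2+|t|)^{-N}$.

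The main obstacle is proving density of $\mathcal{D}$ in $\CR^2(\mathbb{H}^n,\sigma)$. Through the isometry above, it suffices to show that the algebraic tensor product $C^\infty_c((0,\infty))\otimes\C[z_1,\dots,z_n]$ is dense in $L^2((0,\infty),\,d\tau;\,\mathcal{F}_\tau)$. I would measurably trivialize the direct integral via the monomial basis $\{z^\alpha\}$ of each Fock space, whose $\mathcal{F}_\tau$-norms are the explicit smooth functions $c_\alpha\,\tau^{-(n+|\alpha|)/2}$, producing a unitary equivalence with $L^2((0,\infty))\otimes\ell^2(\N^n)$. Density of $\mathcal{D}$ then follows from the density of $C^\infty_c((0,\infty))$ in $L^2((0,\infty))$ and of polynomials in each Fock space, both standard. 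The care required is mostly bookkeeping: the $\tau$-dependent normalizations are smooth and bounded away from zero on the support of any $\chi\in C^\infty_c((0,\infty))$, so they can be absorbed into $\chi$ without leaving the class of admissible coefficients.
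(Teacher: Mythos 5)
Your proposal is correct and follows essentially the same route as the paper: the same partial Fourier transform identifying $\CR^2(\mathbb{H}^n,\sigma)$ with a field of Fock spaces over $\tau>0$, the same dense class spanned by $P(z)\chi(\tau)$ with $\chi\in C^\infty_c((0,\infty))$, and decay obtained by integration by parts in $\tau$ (the paper integrates against $e^{i\xi t-\xi|z|^2}$ to get $(|z|^2-it)^{-N}$ directly, while you handle $z$ and $t$ separately and combine by a case distinction, which works equally well). The only variation is the density step, where you trivialize the direct integral via the explicit monomial basis rather than the paper's orthogonality plus Lebesgue-point argument in $\tau$; both rest on the same standard fact that polynomials are dense in the Fock space.
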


\begin{proof}
In view of the invariance under $t$-translations of the Heisenberg $\CR$ structure, we exploit the partial Fourier transform\begin{equation*}
\mathcal{F}f(z,\xi):=\frac{1}{\sqrt{2\pi}}\int_{\R}f(z,t)e^{-i\xi t}dt.
\end{equation*}
The operator $\mathcal{F}$ is a unitary isometry of $L^2(\mathbb{H}^n,\sigma)$, and \begin{equation}\label{eq:partial_fourier}
\mathcal{F}\left((\partial_{\overline{z}_j}-iz_j\partial_t)f\right)=(\partial_{\overline{z}_j}+z_j\xi)\mathcal{F}f=e^{-\xi|z|^2}\partial_{\overline{z}_j}\left(e^{\xi|z|^2}\mathcal{F}f\right).
\end{equation}
Thus $f\in \CR^2(\mathbb{H}^n,\sigma)$ if and only if $g(z,\xi)=e^{\xi|z|^2}\mathcal{F}f$ is holomorphic for every $\xi\in \R$. Since $g\in L^2(\C^n,e^{-2\xi|z|^2})$ for almost every $\xi$ and there are no nonzero Lebesgue square-integrable holomorphic functions on $\C^n$, we must have $g(z,\xi)=0$ for almost every $\xi\leq 0$. In conclusion, the operator $f\mapsto e^{\xi|z|^2}\mathcal{F}f$ establishes a unitary isomorphism between $\CR^2(\mathbb{H}^n,\sigma)$ and the Hilbert space 
\begin{equation*}\begin{aligned}
\mathcal{H}=\biggr\{g \colon & \C^n\times (0,+\infty)\rightarrow \C\colon g(\cdot, \xi) \text{ is holomorphic }\forall \xi>0 \text{ and } \\ & \int_{\C^n\times (0,+\infty)}|g|^2e^{-2\xi|z|^2}<+\infty\biggr\}.\end{aligned} 
\end{equation*}
A dense subspace $\mathcal{D}'$ of $\mathcal{H}$ is given by the linear span of functions of the form $g(z,\xi)=P(z)\varphi(\xi)$, where $P(z)=P(z_1,\ldots, z_n)$ is a holomorphic polynomial and $\varphi\in C^\infty_c(\R^+)$. To see this, notice that if $g\in \mathcal{H}$ is orthogonal to $\mathcal{D}'$, then $\eps^{-1}\int_a^{a+\eps}\int_{\C^n}g(z,\xi)\overline{P(z)}e^{-2\xi|z|^2}=0$ for every $a,\eps>0$. Letting $\eps$ tend to zero, we see that, for almost every $a>0$, $\int_{\C^n}g(z,a)P(z)e^{-2a|z|^2}=0$. By the arbitrariness of $P$ and the density of polynomials in the Fock space \cite{zhu}, we conclude that $g(z,a)=0$ for almost every $a>0$.

Let $\mathcal{D}:=\mathcal{F}^{-1}(\mathcal{D}')$. By what we just proved, $\mathcal{D}$ is dense in $\CR^2(\mathbb{H}^n)$. It is generated by elements of the form \begin{equation*}
P(z)\int_{\R}\varphi(\xi)e^{-\xi|z|^2}e^{i\xi t}d\xi
\end{equation*} with $P$ and $\varphi$ as above. If $\varphi$ is supported on $[a,+\infty)$, then a standard integration by parts shows that \begin{equation*}
\left|(|z|^2-it)^N\int \varphi(\xi)e^{-\xi|z|^2}e^{i\xi t}d\xi\right|\leq e^{-a|z|^2}\int |\varphi^{(N)}(\xi)|d\xi.
\end{equation*} 
Thus, every element of $\mathcal{D}$ decays faster than any negative power of $|z|^2+|t|$. A similar argument proves that the same holds for every derivative. 
\end{proof}

\subsection{Models satisfy property C}

The next result is not strictly needed for the proof, but shows that the noncompact model $(\mathbb{H}^n,\sigma)$ is in the class of CR manifolds to which the theorem applies. 

\begin{prop}\label{heis_propC}
	The model $(\mathbb{H}^n,\sigma)$ satisfies property C at every point. 
\end{prop}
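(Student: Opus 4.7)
The plan is to reduce property C on $\mathbb{H}^n$ to a global subelliptic $1/2$-estimate and then invoke Rellich's compactness theorem. First I would use that Heisenberg left-translations act as CR automorphisms preserving $\sigma$, so property C holds at one point of $\mathbb{H}^n$ iff it holds at every point; it therefore suffices to verify it at the origin. I would then fix any precompact open neighborhood $B$ of $0\in\mathbb{H}^n$.

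The key analytic step will be to establish the global Kohn-type subelliptic estimate
\begin{equation*}
\|u\|_{W^{1/2}(\mathbb{H}^n)}\leq C\bigl(\|\overline\partial_b u\|_{L^2(\mathbb{H}^n)}+\|u\|_{L^2(\mathbb{H}^n)}\bigr),\qquad u\in L^2(\mathbb{H}^n)\cap\CR^2(\mathbb{H}^n,\sigma)^\perp.
\end{equation*}
Granting this, I would conclude as follows: for $f\in\mathfrak{D}(B)$ the function $u:=(I-\Sz_n)f$ lies in the orthogonal complement of $\CR^2(\mathbb{H}^n,\sigma)$, satisfies $\overline\partial_b u=\overline\partial_b f$ (since $\Sz_n f$ is CR), and has $\|u\|_{L^2}\leq\|f\|_{L^2}$; hence $\|u\|_{W^{1/2}(\mathbb{H}^n)}\lesssim\mathcal{E}(f)^{1/2}$. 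Restricting to $B$ and invoking the Rellich compact embedding $W^{1/2}(B)\hookrightarrow L^2(B)$ will show that the map $f\mapsto 1_B(I-\Sz_n)f$ is compact from $\mathfrak{D}(B)$ to $L^2(B)$, which is precisely property C.

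The principal obstacle is the subelliptic estimate itself. In the compact setting it is a classical consequence of Kohn's theorem, as indicated in Remark~\ref{rmk:property_C}, but the non-compactness of $\mathbb{H}^n$ forces a separate argument. I would obtain it from the Folland--Stein theory of the Kohn Laplacian $\Box_b=\overline\partial_b^*\overline\partial_b$: since $\Box_b$ is a left-invariant operator on the stratified Lie group $\mathbb{H}^n$, parabolically homogeneous of degree $2$, it admits a relative fundamental solution $N$ satisfying $\Box_b N=N\Box_b=I-\Sz_n$; writing $u=N\overline\partial_b^*\overline\partial_b u$ for $u\in\CR^2(\mathbb{H}^n,\sigma)^\perp$ and recognizing $N\overline\partial_b^*$ as a convolution operator of parabolic order $-1$, I would conclude $\|u\|_{S^2_1(\mathbb{H}^n)}\lesssim\|\overline\partial_b u\|_{L^2}+\|u\|_{L^2}$, where $S^2_1$ denotes the non-isotropic Folland--Stein Sobolev space of order $1$; this space embeds continuously into $W^{1/2}(\mathbb{H}^n)$. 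A possible alternative would be to use the partial Fourier transform in $t$ from the proof of Lemma~\ref{density_lem} to reduce the estimate to a one-parameter family of weighted $\overline\partial$-problems on $\mathbb{C}^n$, each treated by Hörmander's $L^2$-method with plurisubharmonic weight $\xi|z|^2$.
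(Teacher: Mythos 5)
Your proposal is correct, but it takes a genuinely different route from the paper. The paper never proves (or uses) a global subelliptic estimate on $\mathbb{H}^n$; instead it splits $f$ into frequency bands via the partial Fourier transform in $t$: for the high frequencies $P_{>T}f$ it applies H\"ormander's Fock-space estimate \eqref{fock} to get the \emph{smallness} bound \eqref{high_freq} with gain $2/T$, while for the low frequencies $P_{\leq T}f$ it uses the commutator identity $[L_j,\overline{L}_j]=-2i\partial_t$ together with Kohn's local subelliptic estimate \eqref{hormander} for bracket-generating vector fields to get $W^{1/2}$ control with a constant growing like $T$ (see \eqref{low_freq}), and then assembles compactness by a Rellich-plus-$\eps$ diagonal argument. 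Your plan instead asserts a single global estimate $\|u\|_{W^{1/2}(\mathbb{H}^n)}\lesssim \|\overline\partial_b u\|_{L^2}+\|u\|_{L^2}$ on $\CR^2(\mathbb{H}^n,\sigma)^\perp$, which is in fact true on the Heisenberg group: frequency-wise, the spectral gap of $\Box_b$ above its kernel is $\gtrsim \xi$ for $\xi>0$ (this is exactly \eqref{fock}) and its bottom is $2n|\xi|$ for $\xi\leq 0$ (creation--annihilation commutator), so one controls $\||\partial_t|^{1/2}u\|_{L^2}$ and, via the commutator identity, the full horizontal gradient; your Folland--Stein route (relative fundamental solution $N$ with $\Box_b N=N\Box_b=I-\Sz_n$, homogeneous-kernel Calder\'on--Zygmund theory, and the local embedding $S^2_1\subset W^{1/2}_{\mathrm{loc}}$) is a standard, correct way to package this, and your ``alternative'' via the partial Fourier transform is essentially the paper's own computation reorganized into a global statement. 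What each approach buys: yours is shorter once the Folland--Stein machinery is granted, proves a strictly stronger quantitative fact (a global gap estimate, uniform over all neighborhoods $B$, which also makes your translation-invariance reduction unnecessary), while the paper's argument is self-contained modulo \cite[Lemma 4.4.1]{hormander} and Kohn's estimate, avoids non-isotropic Sobolev spaces and singular integrals on homogeneous groups, and its splitting scheme is the one that survives in situations where no uniform global gap exists (the degeneration of the gap as $\xi\to 0^+$ is exactly why the paper trades a $T$-growing constant against a $T^{-1}$ gain rather than claiming one clean estimate). Two small points to attend to in a write-up: $N\overline\partial_b{}^*$ itself, having a kernel homogeneous of degree $1-Q$ ($Q=2n+2$), is not globally $L^2$-bounded --- only the order-zero compositions $X_j N\overline\partial_b{}^*$ are, which suffices since $\|u\|_{L^2}$ already sits on the right-hand side; and the identity $u=N\overline\partial_b{}^*\overline\partial_b u$ for $u$ merely in $L^2$ with $\overline\partial_b u\in L^2$ needs a routine density/regularization justification.
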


\begin{proof} Let $\mathcal{F}$ be the partial Fourier transform as in the proof of Lemma \ref{density_lem}.  By \eqref{eq:partial_fourier}, we have the identity\begin{equation*}
		\mathcal{F}\left(\mathcal{S}_nf\right)(\cdot, \xi)=e^{-\xi|\cdot|^2}\mathcal{B}_{\xi}\left(e^{\xi|\cdot|^2}\mathcal{F}f(\cdot,\xi)\right),
	\end{equation*}
	where $\mathcal{B}_\xi$ is the Bergman projection of the Fock space $(\C^n,e^{-2\xi|z|^2})$, that is, the orthogonal projection onto the closed subspace of entire holomorphic functions that are square integrable with respect to the Gaussian measure with density $e^{-\xi|z|^2}$. In particular, $\mathcal{B}_\xi=0$ when $\xi\leq0$. The following estimate is well-known: 
	\begin{equation}\label{fock}
		\int_{\C^n} |(1-B_\xi)u|^2e^{-2\xi|z|^2}dA(z)\leq2 \xi^{-1}\int_{\C^n} \left(\sum_j|\partial_{\overline{z}_j} u|^2\right)e^{-2\xi|z|^2}dA(z)\quad\forall u\in C^\infty_c(\C^n),\ \xi>0.
	\end{equation}	
In fact, $g=(1-B_\xi)u$ is the solution of minimal Gaussian $L^2$ norm of the equation $\overline\partial g=\overline\partial u$, and one may apply \cite[Lemma 4.4.1]{hormander}. 
	
	Let $f\in C^\infty_c(\mathbb{H}^n)$. Define the projection operator $P_{\leq T}$ ($T\in \R$) by the identity\begin{equation*}
		\mathcal{F}\left(P_{\leq T}f\right)(z,\xi)=1_{(-\infty,T]}(\xi)\mathcal{F}f(z,\xi),
	\end{equation*} and put $P_{>T}:=I-P_{\leq T}$.\newline 
	Applying \eqref{fock} to $e^{\xi|\cdot|^2}\mathcal{F}f(\cdot,\xi)$ and integrating in $\xi\in(T,+\infty)$, we find\begin{equation}\label{high_freq}
		\int_{\mathbb{H}^n} |P_{> T}f-\mathcal{S}_nP_{> T}f|^2d\sigma\leq\frac{2}{T}  \int_{\mathbb{H}^n} \left(\sum_j|\overline{L}_jf|^2\right)d\sigma.
	\end{equation}
	Notice that we used the fact that $\overline{L}_j$ and $\mathcal{S}_n$ commute with $P_{>T}$. We treat the low-frequency component using the standard commutator formula\begin{equation*}
		\int_{\mathbb{H}^n}|L_jf|^2d\sigma = \int_{\mathbb{H}^n}|\overline{L}_jf|^2d\sigma+\int_{\mathbb{H}^n}[L_j,\overline{L}_j]f\cdot \overline{f}d\sigma.
	\end{equation*}
	Since $[L_j,\overline{L}_j]=-2i\partial_t$, this immediately yields\begin{equation}\label{low_freq0}
		\int_{\mathbb{H}^n}|L_jP_{\leq T}f|^2d\sigma \leq  \int_{\mathbb{H}^n}|\overline{L}_jP_{\leq T}f|^2d\sigma+2T\int_{\mathbb{H}^n}|P_{\leq T}f|^2d\sigma.
	\end{equation}
	The identity $[L_j,\overline{L}_j]=-2i\partial_t$ also implies that $\{\Re(L)_j, \Im(L)_j\}_j$ is a system of vector fields satisfying H\"ormander's bracket condition of order $2$. Hence (see, e.g., \cite{kohn}), denoting by $\chi$ and $\chi'$ two test functions with $\chi'=1$ on a neighborhood of the support of $\chi$, we have \begin{equation}\label{hormander}
		||f||_{W^{1/2}(B)}^2\leq C_B\left( ||f||_{L^2}^2+\sum_j\left(|| L_jf||_{L^2}^2+||\overline{L}_jf||_{L^2}^2\right)\right)\qquad\forall f\in C^\infty(\mathbb{H}^n),
	\end{equation} where $B\subset\C^n$ is a ball (or any, say, smooth open set) and $W^{1/2}$ denotes the fractional Sobolev norm of order $1/2$. Putting \eqref{low_freq0} and \eqref{hormander} together (and using again $[\overline{L}_j,P_{\leq T}]=0$), we obtain\begin{equation}\label{low_freq}
		||P_{\leq T}f||_{W^{1/2}}^2\leq C_B(1+2T)||f||_{L^2}^2+2C_B\sum_j||\overline{L}_jf||_{L^2}^2.
	\end{equation}
	We can now complete the proof. Given $\{f_k\}_k\subset C^\infty_c(B)$ with bounded CR energy, the sequence $\{P_{\leq T}f_k\}_k$, and a fortiori $\{(1-\mathcal{S}_n)P_{\leq T}f_k\}_k$, has a subsequence of $L^2$-diameter $\leq \eps$. This follows from \eqref{low_freq} and the compactness of the Sobolev embedding $W^\frac{1}{2}(B)\hookrightarrow L^2(B)$. If $T\geq 2\eps^{-2}$, by virtue of \eqref{high_freq} the whole sequence $\{(1-\mathcal{S}_n)P_{> T}f_k\}_k$ has $L^2$-diameter $\leq\eps$ too. The conclusion of the proposition follows immediately. 
\end{proof}

\section{Proof of Theorem \ref{thm:intro}}\label{sec:proof}

Write $\Sz=\Sz_{M,\nu}$ for simplicity. Fix a coordinate patch $\Omega$ around $x_0$ equipped with a system of Folland--Stein coordinates $(z,t)$, and $p\in [1,+\infty)$ (notice that $\Sz$ is self-adjoint, so the case $p=\infty$ of the theorem follows by duality from the $p=1$ case).  

We identify $\Omega$ with an open neighborhood of $(0,0)$ in $\mathbb{H}^n\equiv \C^n\times \R$ via these coordinates.

Fix $f\in C^\infty_c(\mathbb{H}^n)$. If $\lambda>0$ is large enough, depending on the support of $f$, $f\circ \Phi_\lambda$ is supported on $\Omega$, and hence it may be thought of as a test function on $M$. Without loss of generality, we may assume that $\frac{d\nu}{d\sigma}(x_0)=1$, where $\sigma$ is Lebesgue measure in the Folland--Stein coordinates.

Thus $\Sz(f\circ \Phi_\lambda)$ is a well-defined element of $L^2(M,\nu)$. Let $\rho\in C^\infty_c(\mathbb{H}^n,[0,1])$ be a cut-off function identically equal to $1$ in a neighborhood of the origin, i.e., of $x_0$. We set $\rho_\mu:=\rho\circ \Phi_{\mu}$ ($\mu>0$). Notice that the support of $\rho_\mu$ shrinks to $x_0$ as $\mu$ tends to $\infty$, and that $\Sz(f\circ \Phi_\lambda)\rho_\mu$ is supported on $\Omega$ for every large $\mu$. Thus, it can be thought of as a function on $\mathbb{H}^n$. Put \begin{equation*}
	g_\lambda:=\left(\Sz(f\circ \Phi_\lambda)\rho_{\sqrt{\lambda}}\right)\circ \Phi_{\lambda^{-1}}\in L^2(\mathbb{H}^n,\sigma).
\end{equation*}

\begin{rmk} It will be clear soon that $\sqrt{\lambda}$ could be replaced by any function $F:\R^+\rightarrow \R^+$ such that
\begin{equation*}\lim_{\lambda\rightarrow+\infty}F(\lambda)=+\infty\quad \text{and}\quad  \lim_{\lambda\rightarrow+\infty}\frac{F(\lambda)}{\lambda}=0.
\end{equation*}
In other words, what turns out to be crucial is localizing the Szegő projection of $f\circ \Phi_\lambda$, which is supported at the parabolic infinitesimal scale $\lambda^{-1}$, at an infinitesimal scale much larger than $\lambda^{-1}$. 
\end{rmk}

It is clear that for every $\lambda$ large\begin{eqnarray*}
	\int_{\mathbb{H}^n}|g_\lambda|^pd\sigma &=& 	\lambda^{2n+2}\int_{\Omega}|\Sz(f\circ \Phi_\lambda)\rho_{\sqrt\lambda}|^pd\sigma\\
	&=& (1+o(1))\lambda^{2n+2}\int_{\Omega}|\Sz(f\circ \Phi_\lambda)\rho_{\sqrt\lambda}|^pd\nu\\
	&\leq& (1+o(1))\lambda^{2n+2}N_p(M,\nu)^p\int_{\Omega}|f\circ \Phi_\lambda|^pd\nu\\
	&=& (1+o(1))\lambda^{2n+2}N_p(M,\nu)^p\int_{\Omega}|f\circ \Phi_\lambda|^pd\sigma\\
	&=& (1+o(1))N_p(M,\nu)^p\int_{\mathbb{H}^n}|f|^pd\sigma,
\end{eqnarray*}
where we used a couple of times the fact that $\frac{d\nu}{d\sigma}(x_0)=1$. 

By Banach--Alaoglu, along a diverging subsequence of $\lambda$'s, $g_\lambda$ has a weak limit both in $L^2(\mathbb{H}^n,\sigma)$ and $L^p(\mathbb{H}^n,\sigma)$, for any fixed $p>1$. Denoting by $g\in L^2\cap L^p(\mathbb{H}^n,\sigma)$ this limit, we clearly have \begin{equation}\label{norm_bound}\int_{\mathbb{H}^n}|g|^pd\sigma\leq N_p(M,\nu)^p\int_{\C^n\times \R}|f|^pd\sigma.\end{equation} 
From now on, limits in $\lambda$ are always along appropriate diverging subsequences. 

\par \emph{We claim that the limit $g$ is $\CR$ with respect to the model structure}, i.e., that $g\in \CR^2(\mathbb{H}^n,\sigma)$. To see this, we start by noticing that by \eqref{f-s},\begin{equation}\label{f-s-2}
\overline{L}_j\left(\Sz(f\circ \Phi_\lambda)\rho_{\sqrt\lambda}\right)\circ \Phi_{\lambda^{-1}}=\lambda (\partial_{z_j}+i\overline{z}_j\partial_t)g_\lambda+\lambda E_j^\lambda g_\lambda,
\end{equation}
in the sense of distributions. If $E_j=\sum_{k=1}^n\left\{a_{j,k}\partial_{z_k}+b_{j,k}\partial_{\overline{z}_k}\right\}+c\partial_t$, the rescaled error terms above are given by \begin{equation*}
E_j^\lambda=\sum_{k=1}^n\left\{a_{j,k}\circ\Phi^{-1}_\lambda\cdot \partial_{z_k}+b_{j,k}\circ\Phi^{-1}_\lambda\cdot \partial_{\overline{z}_k}\right\}+\lambda c\circ\Phi^{-1}_\lambda\cdot\partial_t.
\end{equation*}
Notice that the formal adjoint of $E_j^\lambda$ (w.r.t.~Lebesgue measure $d\sigma$) is $-E_j^\lambda-\lambda^{-1}e_j\circ\Phi_\lambda^{-1}$, where $e^\lambda_j(z,t)$ is a smooth function. 

Since $\rho$ is identically $1$ on a neighborhood $V$ of the origin, the LHS of \eqref{f-s-2} vanishes on $\Phi_{\sqrt{\lambda}}(V)$. Since $\bigcup_{\lambda}\Phi_{\sqrt{\lambda}}(V)=\C^n\times \R$, for any $\varphi\in C^\infty_c(\mathbb{H}^n)$ and $\lambda$ large, we have
\begin{eqnarray*}
\int_{\C^n\times \R} (\partial_{z_j}+i\overline{z}_j\partial_t)g_\lambda\cdot \varphi\, d\sigma&=&-\int_{\C^n\times \R} E_j^\lambda g_\lambda\cdot \varphi\, d\sigma\\
&=&\int_{\C^n\times \R}g_\lambda\cdot  (E_j^\lambda +\lambda^{-1}e_j\circ\Phi_\lambda^{-1})\varphi\, d\sigma.
\end{eqnarray*}
By Lemma \ref{lem:f_s}, $a_{j,k}$ and $b_{j,k}$ have parabolic weight $\geq 1$ and $c$ has weight $\geq 2$, and therefore $||(E_j^\lambda +\lambda^{-1}e_j\circ\Phi_\lambda^{-1})\varphi||_{\infty}=O(\lambda^{-1})$. Since $g_\lambda$ are uniformly in $L^2(\C^n\times \R)$, we conclude that $(\partial_{z_j}+i\overline{z}_j\partial_t)g_\lambda$ tends to zero in the sense of distributions. Thus, $(\partial_{z_j}+i\overline{z}_j\partial_t)g=0$ and the claim is proved. 
\newline
\par We are left with the proof that $g=\Sz_n(f)$. In fact, if we show this then \eqref{norm_bound} and the arbitrariness of $f$ entail $N_p(\mathbb{H}^n,\sigma)\leq N_p(M,\nu)$, as we wanted.

Since we already know that $g$ is $\CR$, to prove that $g=\Sz_n(f)$ what we need to show is that $f-g$ is orthogonal to $\CR^2(\mathbb{H}^n,\sigma)$. This is clearly equivalent to \begin{equation*}
\lim_{\lambda\rightarrow +\infty}\int_{\mathbb{H}^n}g_\lambda \overline{h}d\sigma = \int_{\mathbb{H}^n}f \overline{h}d\sigma \qquad\forall h\in \CR^2(\mathbb{H}^n,\sigma). 
\end{equation*}
In fact, it is enough to verify this identity for $h$ in the dense subspace $\mathcal{D}$ of Lemma \ref{density_lem}. By the self-adjunction of $\Sz$ in $L^2(M,\nu)$, we have \begin{eqnarray*}
\int_{\mathbb{H}^n}g_\lambda \overline{h}d\sigma&=&\int_{\mathbb{H}^n}\left(\Sz(f\circ \Phi_\lambda)\rho_{\sqrt\lambda}\right)\circ \Phi_{\lambda^{-1}}\cdot \overline{h}d\sigma\\&=&
\lambda^{2n+2}\int_M \Sz(f\circ \Phi_\lambda)\rho_{\sqrt\lambda} \cdot \overline{h\circ \Phi_{\lambda}}\frac{d\sigma}{d\nu}d\nu\\&=&
\lambda^{2n+2}\int_\Omega f\circ \Phi_\lambda \cdot \overline{\Sz\left(\rho_{\sqrt\lambda}\frac{d\sigma}{d\nu}\cdot h\circ \Phi_{\lambda}\right)}\frac{d\nu}{d\sigma}d\sigma\\&=&
\int_{\mathbb{H}^n} f \cdot \overline{\left\{\Sz\left(\rho_{\sqrt\lambda}\frac{d\sigma}{d\nu}\cdot h\circ \Phi_{\lambda}\right)\frac{d\nu}{d\sigma}\right\}\circ \Phi_\lambda^{-1}} d\sigma
\end{eqnarray*}
Notice that the various passages from $d\sigma$ to $d\nu$ are meaningful, because for $\lambda$ large, $\rho_\lambda$ and $f\circ \Phi_\lambda$ are supported in $\Omega$.

Thus, our task is reduced to proving that  \begin{equation*}
\lim_{\lambda\rightarrow+\infty} \left\{\Sz\left(\rho_{\sqrt\lambda}\frac{d\sigma}{d\nu}\cdot h\circ \Phi_{\lambda}\right)\frac{d\nu}{d\sigma}\right\}\circ \Phi_\lambda^{-1} = h\qquad\forall h\in \mathcal{D}
\end{equation*}
in the sense of distributions.\newline 
Notice that if we remove $\Sz$ from the expression in the limit, we get $h\rho_{\lambda^{-\frac{1}{2}}}$, which clearly $L^2$-converges to $h$. Thus, setting \begin{equation*}
u_\lambda:=\left(I-\Sz\right)\left(\rho_{\sqrt\lambda}\frac{d\sigma}{d\nu}\cdot h\circ \Phi_{\lambda}\right),
\end{equation*} 
it is enough to see that $\lim_{\lambda\rightarrow+\infty}\left(u_\lambda \frac{d\nu}{d\sigma}\right)\circ \Phi_\lambda^{-1}=0$ in the sense of distributions, i.e., 
\begin{equation*}
\int_{\mathbb{H}^n}u_\lambda\frac{d\nu}{d\sigma}\lambda^{2n+2}\varphi\circ \Phi_\lambda d\sigma=0\qquad\forall \varphi\in C^\infty_c(\mathbb{H}^n). 
\end{equation*} 
Let $B$ be the compact neighborhood of $x_0$ appearing in the definition of Property C. We claim that:\begin{equation}\label{final_claim}
1_B\lambda^{n+1}u_\lambda\text{ has a strong } L^2 \text{ limit }v,\text{ along an appropriate subsequence}.\end{equation} 
Notice that $\nu$ and $\sigma$ are comparable on $B$ and we do not need to specify in which $L^2$ norm the convergence happens. \newline
Let us show that the claim allows to conclude. We write
\begin{eqnarray*}
&&\int_{\mathbb{H}^n}u_\lambda\frac{d\nu}{d\sigma}\lambda^{2n+2}\varphi\circ \Phi_\lambda\, d\sigma=\int_{\mathbb{H}^n}(\lambda^{n+1}u_\lambda-v)\frac{d\nu}{d\sigma}\lambda^{n+1}\varphi\circ \Phi_\lambda\, d\sigma\\
&&+\int_{\mathbb{H}^n}(v-v_0)\frac{d\nu}{d\sigma}\lambda^{n+1}\varphi\circ \Phi_\lambda\, d\sigma+\lambda^{-n-1}\int_{\mathbb{H}^n}v_0\frac{d\nu}{d\sigma}\lambda^{2n+2}\varphi\circ \Phi_\lambda\, d\sigma,
\end{eqnarray*}
where $v_0\in C_c(\mathbb{H}^n)$ is such that $||v-v_0||_{L^2}\leq \eps$, with $\eps$ small. The first term vanishes in the limit, simply because $\lambda^{n+1}\varphi\circ \Phi_\lambda$ is uniformly bounded in $L^2$ norm, and the second is $\mathcal{O}(\eps)$ for the same reason. Since the last one is asymptotic to $\lambda^{-n-1}v_0(0,0)\int\varphi d\sigma$, the conclusion follows by the arbitrariness of $\eps$. 
 
 We now prove claim \eqref{final_claim}. In virtue of property C, it is enough to prove that the family of test functions $\{\lambda^{n+1}\rho_{\sqrt{\lambda}}\frac{d\sigma}{d\nu}\cdot h\circ \Phi_{\lambda}\}_\lambda$ is bounded w.r.t.~the "CR energy" \eqref{eq:CR_energy}. The $L^2$ norm is clearly bounded uniformly in $\lambda$. We compute, for $L_j$ as in Lemma \ref{lem:f_s}, 
\[\begin{aligned}
 	\overline{L}_j\left(\rho_{\sqrt\lambda}\frac{d\sigma}{d\nu}\cdot h\circ \Phi_{\lambda}\right) &=\left[\overline{L}_j\left(\rho\circ\Phi_{\sqrt\lambda}\right)\frac{d\sigma}{d\nu}+\rho_{\sqrt\lambda}\overline{L}_j\left(\frac{d\sigma}{d\nu}\right)\right] h\circ \Phi_{\lambda}+\rho_{\sqrt\lambda}\frac{d\sigma}{d\nu}\overline{L}_j\left(h\circ \Phi_{\lambda}\right)\\
 	&=\left[\left({\sqrt\lambda} \left((\partial_{z_j}+i\overline{z}_j\partial_t)\rho+E_j^{\sqrt\lambda} \rho\right)\circ \Phi_{\sqrt\lambda} \right)\frac{d\sigma}{d\nu}+\rho_{\sqrt\lambda}\overline{L}_j\left(\frac{d\sigma}{d\nu}\right)\right] h\circ \Phi_{\lambda}\\
 	& \qquad +\rho_{\sqrt\lambda}\frac{d\sigma}{d\nu}\lambda\left\{(\partial_{z_j}+i\overline{z}_j\partial_t)h+E_j^\lambda h\right\}\circ \Phi_{\lambda}\\
 	&=\left[\left( {\sqrt\lambda} \left((\partial_{z_j}+i\overline{z}_j\partial_t)\rho+E_j^{\sqrt\lambda} \rho\right)\circ \Phi_{\sqrt\lambda} \right)\frac{d\sigma}{d\nu}+\rho_{\sqrt\lambda}\overline{L}_j\left(\frac{d\sigma}{d\nu}\right)\right] h\circ \Phi_{\lambda}\\
 	&\qquad+\rho_{\sqrt\lambda}\frac{d\sigma}{d\nu}\lambda 	\left(E_j^\lambda h\right)\circ \Phi_{\lambda} ,	\end{aligned}
 \]
where we used the fact that $h$ is $\CR$ with respect to the Heisenberg structure. The easiest term to deal with is \begin{equation*}
\int_M\left|h\circ \Phi_{\lambda}\rho_{\sqrt\lambda}\overline{L}_j\left(\frac{d\sigma}{d\nu}\right)\right|^2d\nu= (1+o(1))\left|\overline{L}\left(\frac{d\sigma}{d\nu}\right)(0)\right|^2\lambda^{-2n-2}\int_{\mathbb{H}^n}|h|^2d\sigma.
\end{equation*}
Next, 
\begin{equation*}\begin{aligned}
\int_M&\left|\sqrt\lambda \left( (\partial_{z_j}+i\overline{z}_j\partial_t)\rho+E_j^{\sqrt\lambda} \rho\right)\circ \Phi_{\sqrt\lambda}\frac{d\sigma}{d\nu}\right|^2|h\circ\Phi_\lambda|^2d\nu\\
&=(1+o(1))\lambda^{-n-1}\lambda\int_{\mathbb{H}^n}\left|\left( (\partial_{z_j}+i\overline{z}_j\partial_t)\rho+E_j^{\sqrt\lambda} \rho\right)\right|^2|h\circ\Phi_{\sqrt\lambda}|^2d\sigma\\
&\leq C(1+o(1))\lambda^{-n}\int_{\mathbb{H}^n\setminus V}|h\circ\Phi_{\sqrt\lambda}|^2d\sigma,\end{aligned}
\end{equation*}
where we used the fact that $\rho$ is identically equal to $1$ in the neighborhood $V$ of the origin and that the coefficients of $E_j^{\sqrt{\lambda}}$ are bounded uniformly in $\lambda$. Here we take advantage of the fact that $h$ is in $\mathcal{D}$. By Lemma \ref{density_lem}, the quantity above may be estimated by $|h\circ\Phi_{\sqrt\lambda}|\leq C_N\lambda^{-N}(|z|^2+|t|)^{-N}$ for any $N\in \N$. Choosing $N$ large enough, we obtain the desired estimate. \newline
Finally, \begin{equation*}
\int_M\left|\rho_{\sqrt\lambda}\frac{d\sigma}{d\nu}\lambda 	\left(E_j^\lambda h\right)\circ \Phi_{\lambda}\right|^2d\nu=(1+o(1))\lambda^{-2n-2}\int_{\mathbb{H}^n}|\lambda \left(E_j^\lambda h\right)\rho\circ \Phi_{\lambda^{-\frac{1}{2}}}|^2\, d\sigma.
\end{equation*}
The vector field $\lambda E_j^\lambda$ has coefficients bounded uniformly in $\lambda$. Therefore, to bound the last term we need $\int_{\mathbb{H}^n}\sum_{j=1}^n(|\partial_{z_j}h|^2+|\partial_{\overline z_j}h|^2)+|\partial_th|^2<+\infty$, which certainly holds for $h\in \mathcal{D}$. This completes the proof of the boundedness of the CR energy of $\{\lambda^{n+1}\rho_{\sqrt{\lambda}}\frac{d\sigma}{d\nu}\cdot h\circ\Phi_\lambda \}_\lambda$, and hence of the theorem. 

\bibliographystyle{amsalpha}
\bibliography{extremality}

\end{document}